\documentclass[12pt]{article}

\usepackage{amsmath}
\usepackage{amssymb}
\usepackage{amsthm}
\usepackage{authblk}
\usepackage{colonequals}
\usepackage[letterpaper,margin=1in]{geometry}
\usepackage{graphicx}
\usepackage{xcolor}
\usepackage{hyperref}

\definecolor{cblue}{rgb}{0.121569,0.466667,0.705882}
\definecolor{cgreen}{rgb}{0.172549,0.627451,0.172549}
\definecolor{corange}{rgb}{1.000000,0.498039,0.054902}
\hypersetup{
  linkcolor  = cblue,
  citecolor  = cgreen,
  urlcolor   = corange,
  colorlinks = true,
}

\newtheorem{theorem}{Theorem}[section]
\newtheorem{proposition}[theorem]{Proposition}
\newtheorem{conjecture}[theorem]{Conjecture}

\DeclareSymbolFont{sfoperators}{OT1}{cmss}{m}{n}
\DeclareSymbolFontAlphabet{\mathsf}{sfoperators}
\makeatletter
\renewcommand{\operator@font}{\mathgroup\symsfoperators}
\makeatother
\DeclareMathOperator{\Unif}{Unif}

\title{A revision of Litvak's conjecture on Gaussian minima and a volumetric zone conjecture}
\author{Dmitriy Kunisky}
\affil{Department of Applied Mathematics \& Statistics, Johns Hopkins University}
\date{May 3, 2026}

\newcommand{\EE}{\mathbb{E}}
\newcommand{\PP}{\mathbb{P}}
\newcommand{\RR}{\mathbb{R}}
\renewcommand{\SS}{\mathbb{S}}
\newcommand{\Px}{\mathop{\mathbb{P}}}
\newcommand{\Ex}{\mathop{\mathbb{E}}}
\newcommand{\eqd}{\stackrel{\mathrm{(d)}}{=}}
\newcommand{\N}{\mathcal N}
\newcommand{\simplex}{\Delta}
\newcommand{\coscov}{\mathrm{cos}}
\newcommand{\dd}{\,d}
\newcommand{\FejesToth}{Fejes T\'{o}th}

\begin{document}

\maketitle

\begin{abstract}
    Litvak (2018) conjectured that, for any $p > 0$, the quantity $\mathbb{E}[\min_{i = 1}^n |g_i|^p]$ where $g \sim \mathcal{N}(0, \Sigma)$ is a centered Gaussian random vector is minimized among $n \times n$ correlation matrices $\Sigma$ by the Gram matrix of the regular simplex in $\mathbb{R}^{n - 1}$.
    We disprove this conjecture: the matrix with entries $\Sigma^{\mathrm{cos}}_{ij}=\cos(\pi(i - j) / n)$ already achieves a smaller moment for $p = 2$ and $n = 4$.
    We propose that $\Sigma^{\mathrm{cos}}$ is in fact the correct minimizer of these moments for all $p > 0$ and $n \geq 1$.
    Towards proving this, we conjecture a volumetric extension of Fejes T\'{o}th's zone conjecture~(1973), whose covering version was proved by Jiang and Polyanskii (2017).
    Conditional on this conjecture, we show the stronger result that $\min_{i = 1}^n |g_i|$ for $g \sim \mathcal{N}(0, \Sigma^{\mathrm{cos}})$ is stochastically dominated by $\min_{i = 1}^n |h_i|$ for $h \sim \mathcal{N}(0, \Sigma)$ for any $n \times n$ correlation matrix $\Sigma$.
    Our counterexample $\Sigma^{\mathrm{cos}}$ was found by the AlphaEvolve AI-assisted optimization system, and we also include a brief discussion of its application to such problems.
\end{abstract}

\section{Introduction}
\pagenumbering{arabic}

Let $z \sim \N(0,I_n)$ be a standard Gaussian vector.
For an $n \times n$ correlation matrix $\Sigma$ (i.e., one having $\Sigma \succeq 0$ and $\Sigma_{ii} = 1$ for each $i = 1, \dots, n$), define the random variable
\[
  M(\Sigma) \colonequals \min_{i = 1}^n |(\Sigma^{1/2}z)_i|.
\]
Equivalently, if $g \sim \N(0,\Sigma)$, then $M(\Sigma)$ has the same law as $\min_{i = 1}^n |g_i|$.
The goal of this note is to consider, in various senses, what correlation matrix $\Sigma$ makes the random variable $M(\Sigma)$ as typically small as possible.

One natural choice, motivated by the solution (proven or conjectural) to many other such extremal questions (see below for citations), is the covariance given by the Gram matrix of a regular simplex in $\RR^{n - 1}$,
\[
  \Sigma^\simplex_{ij}
  \colonequals
  \begin{cases}
    1 & \text{if } i=j,\\
    -\frac{1}{n-1} & \text{if } i \neq j.
  \end{cases}
\]
The following conjecture claiming a specific sense in which this is true has, to the best of our knowledge, remained open since its proposal.
\begin{conjecture}[Conjecture~5.1 of \cite{Litvak-2018-SimplexMeanWidthConjecture}]
    \label{conj:litvak}
    Let $n \geq 3$.
    For every $n \times n$ correlation matrix $\Sigma$ and $p > 0$,
    \[
        \EE[M(\Sigma^\simplex)^p] \leq \EE[M(\Sigma)^p].
    \]
\end{conjecture}

The conjecture is a natural companion to the better-known simplex mean width conjecture.
That problem asks whether the regular simplex maximizes the mean width among simplices inscribed in the Euclidean ball.
In Gaussian language, it asks whether the simplex covariance maximizes among correlation matrices the expectation of the largest coordinate of a Gaussian vector, $\EE[\max_{i = 1}^n g_i]$ in the above notation.
The same note \cite{Litvak-2018-SimplexMeanWidthConjecture} also discusses many similar problems, and formulates the similar Conjecture~\ref{conj:litvak} motivated in part by considerations originating with a conjecture of Mallat--Zeitouni \cite{MZ-2011-KarhunenLoeveConjecture}.

The goal of the present note is to disprove this conjecture and propose a modification that we believe to be correct.
Our counterexample to Conjecture~\ref{conj:litvak} will use the covariance matrix
\[
  \Sigma^{\coscov}_{ij}
  \colonequals
  \cos\left(\frac{\pi(i-j)}{n}\right) \text{ for } 1 \leq i,j \leq n,
\]
a rank-two correlation matrix, which already ``beats'' $\Sigma^{\simplex}$ in the context of Conjecture~\ref{conj:litvak} for $n=4$ and $p=2$.
We visualize the matrices $\Sigma^{\coscov}$ and $\Sigma^{\simplex}$ in Figure~\ref{fig:covariances}.

\paragraph{Organization}
The rest of the note is organized as follows.
First, we prove that the above is indeed a counterexample to Conjecture~\ref{conj:litvak}~(Theorem~\ref{thm:counterexample}).
Second, we propose a strengthened version of Conjecture~\ref{conj:litvak} with $\Sigma^{\simplex}$ replaced by $\Sigma^{\coscov}$~(Conjecture~\ref{conj:main}).
Third, we propose an ancillary geometric conjecture~(Conjecture~\ref{conj:volumetric-zone}) which implies our conjecture~(Theorem~\ref{thm:conditional-domination}).
Finally, we give a brief discussion of the helpful and perhaps instructive role of artificial intelligence assistants and black-box optimization in our discovery of these results~(Section~\ref{sec:ai}).

\paragraph{Notation}
We write $\N(\mu, \Sigma)$ for the multivariate Gaussian measure with mean $\mu$ and covariance $\Sigma$.
We write $\SS^{n - 1}(r) \subset \RR^n$ for the sphere of radius $r$ in $\RR^n$, and $\SS^{n - 1} \colonequals \SS^{n - 1}(1)$ for the unit sphere.

\section{Counterexample to Conjecture~\ref{conj:litvak}}

We first discuss the structure of $\Sigma^{\coscov}$ and $M(\Sigma^{\coscov})$.
Define the vectors
\[
  v_j \colonequals \left(\cos\left(\frac{(j - 1)\pi}{n}\right), \sin\left(\frac{(j - 1)\pi}{n}\right)\right) \in \RR^2 \text{ for } 1 \leq j \leq n.
\]
These define lines whose intersection points with the unit circle in $\RR^2$ are the vertices of a regular $2n$-gon.
$\Sigma^{\coscov}$ is the Gram matrix of these vectors $v_j$, $\Sigma^{\coscov}_{ij} = \langle v_i, v_j \rangle$.
Thus, if $z \sim \N(0,I_2)$ and we define
\[
  h_k \colonequals \langle z, v_k\rangle,
\]
then $h$ has the law $\N(0,\Sigma^{\coscov})$.
The law of the associated minimum random variable therefore can be realized as
\[
  M(\Sigma^{\coscov})
  \eqd
  \min_{1 \leq j \leq n} |\langle z, v_j\rangle|.
\]

\begin{figure}[t]
  \centering
  \includegraphics[width=0.88\textwidth]{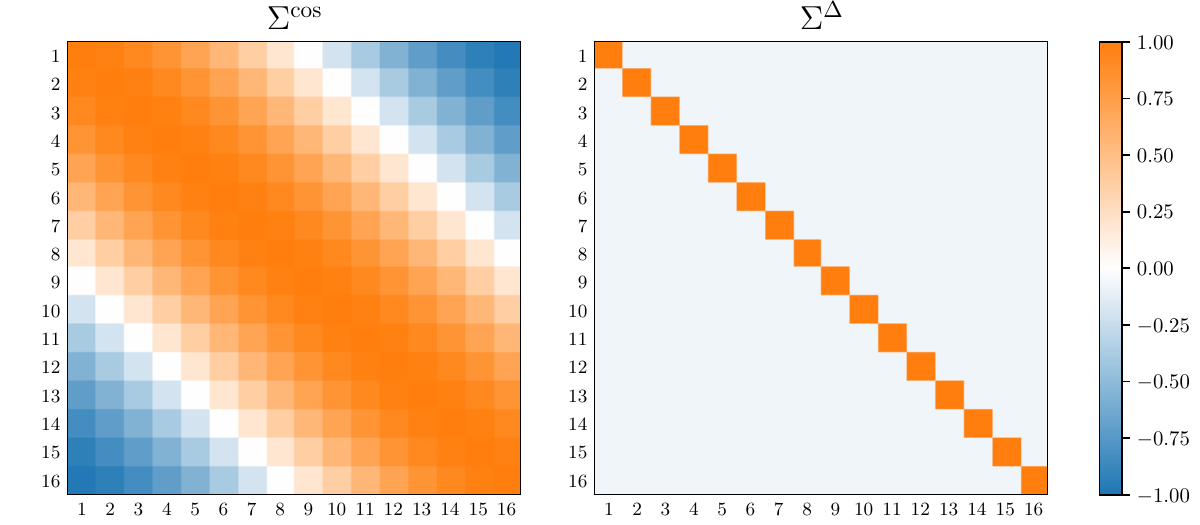}
  \caption{The cosine covariance and the simplex covariance for $n=16$, with entries shown from $-1$ in white to $1$ in black.}
  \label{fig:covariances}
\end{figure}

\begin{proposition}[Law of $M(\Sigma^{\coscov})$]
\label{prop:cosine-law}
Let $r^2 \sim \chi^2(2)$ and $\theta \sim \Unif([0,\frac{\pi}{2n}])$ be independent.\footnote{The law of such $r$ is also sometimes called the \emph{Rayleigh distribution}.}
Then, $M(\Sigma^{\coscov})$ has the same law as $r \sin(\theta)$.
Consequently, for all $t, p > 0$ we have
\begin{align*}
  \PP[M(\Sigma^{\coscov}) \geq t]
  &=
  \frac{2n}{\pi}
  \int_0^{\frac{\pi}{2n}}
    \exp\left(-\frac{t^2}{2\sin^2\theta}\right)
  \dd\theta, \\
  \EE[M(\Sigma^{\coscov})^p]
  &=
  2^{p/2}\Gamma\left(1+\frac p2\right)
  \frac{2n}{\pi}
  \int_0^{\frac{\pi}{2n}} \sin^p\theta \dd\theta.
\end{align*}
\end{proposition}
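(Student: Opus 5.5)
We work in polar coordinates. By the representation just given, $M(\Sigma^{\coscov}) \eqd \min_j |\langle z, v_j\rangle|$ with $z \sim \N(0,I_2)$. Write $z = r(\cos\phi, \sin\phi)$. By rotational invariance of the standard Gaussian, $r^2 \sim \chi^2(2)$ and $\phi \sim \Unif([0,2\pi))$ are independent. Since $v_j$ has angle $(j-1)\pi/n$, we get
\[
  |\langle z, v_j\rangle| = r\,\bigl|\cos\bigl(\phi - (j-1)\pi/n\bigr)\bigr|,
\]
and hence $M = r \min_j |\cos(\phi - (j-1)\pi/n)|$. Everything therefore reduces to identifying the law of the angular factor $A(\phi) \colonequals \min_j |\cos(\phi - (j-1)\pi/n)|$.

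The key step is a symmetry argument. The lines through $\pm v_j$ are spaced at angle $\pi/n$, so the directions orthogonal to them, at angles $\pi/2 + (j-1)\pi/n$, also form an equally spaced family modulo $\pi$. Writing $|\cos(\phi - \alpha_j)| = |\sin(\phi - \beta_j)|$ with $\beta_j = \alpha_j + \pi/2$, and using that $|\sin|$ is increasing in the distance to $\pi\mathbb{Z}$ on $[0,\pi/2]$, we get
\[
  A(\phi) = \sin\bigl(d(\phi)\bigr),
\]
where $d(\phi)$ is the distance from $\phi$ to the nearest point of $\{\beta_j\} + \pi\mathbb{Z} = \pi/2 + (\pi/n)\mathbb{Z}$. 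For $\phi$ uniform on the circle, the distance to a lattice of spacing $\pi/n$ is uniform on $[0, \pi/(2n)]$. This holds because $\phi$ modulo $\pi/n$ is uniform on $[0,\pi/n)$, since $2\pi$ is an integer multiple of $\pi/n$, and folding that interval at its midpoint gives a uniform variable on $[0,\pi/(2n)]$. Since $A$ depends only on $\phi$, which is independent of $r$, we conclude $M \eqd r\sin\theta$ with $\theta \sim \Unif([0,\pi/(2n)])$ independent of $r$.

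The formulas follow by conditioning on $\theta$. For $\theta > 0$,
\[
  \PP[r\sin\theta \geq t \mid \theta] = \PP[r^2 \geq t^2/\sin^2\theta] = \exp\bigl(-t^2/(2\sin^2\theta)\bigr),
\]
since the $\chi^2(2)$ tail is $e^{-x/2}$. Averaging over $\theta$ with density $2n/\pi$ gives the first identity. For the moments, independence gives $\EE[M^p] = \EE[r^p]\,\EE[\sin^p\theta]$. With $r^2 \sim \mathrm{Exp}(1/2)$, i.e.\ $r^2 = 2E$ for $E \sim \mathrm{Exp}(1)$, we get $\EE[r^p] = 2^{p/2}\EE[E^{p/2}] = 2^{p/2}\Gamma(1+p/2)$, and $\EE[\sin^p\theta] = \frac{2n}{\pi}\int_0^{\pi/(2n)}\sin^p\theta\,d\theta$.

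The only real care needed is in the symmetry step. One must check that the zero set $\{\phi : \min_j |\cos(\phi-\alpha_j)| = 0\}$ is exactly the lattice $\pi/2 + (\pi/n)\mathbb{Z}$. One must also check that the minimizing $j$ is the one whose orthogonal direction is nearest to $\phi$ modulo $\pi$, so that the minimum is $\sin$ of a distance at most $\pi/(2n) \le \pi/2$. Both facts are immediate from monotonicity of $\sin$ on $[0,\pi/2]$ together with the $\pi$-periodicity of $|\sin|$.
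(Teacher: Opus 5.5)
Your proof is correct and follows essentially the same approach as the paper: polar coordinates with $r$ independent of a uniform angle $\phi$, then identifying the angular factor as $\sin$ of the distance from $\phi$ to the nearest line perpendicular to some $v_j$ (these lines are spaced $\pi/n$ apart modulo $\pi$), so that this distance is uniform on $[0,\pi/(2n)]$, and finally reading off the tail and moment formulas from independence. Your write-up adds details that the paper leaves implicit, namely the rewriting of $|\cos|$ as $|\sin|$ via the orthogonal directions, the folding argument for uniformity, and the computation of $\EE[r^p]$.
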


\begin{proof}
    Let $z \sim \N(0, I_2)$.
    Write this vector as $z = (r\cos\phi, r\sin\phi)$ in polar coordinates.
    We have $r^2 = z_1^2 + z_2^2$ has law $\chi^2(2)$, $\phi$ is uniformly distributed with law $\Unif([0, 2\pi])$, and the two are independent by a standard property of Gaussian random vectors.
    Since absolute values identify angles modulo $\pi$, the $n$ lines perpendicular to the $v_k$ divide the upper semicircle of the unit circle in $\RR^2$ into $n$ equal arcs.
    The angular distance $\theta$ from $\phi$ to nearest such perpendicular line therefore has law $\Unif([0, \frac{\pi}{2n}])$.
    By construction, we then have
    \[
        M(\Sigma^{\coscov}) \eqd \min_k |\langle z, v_k\rangle| = r \sin(\theta).
    \]
    The tail and moment formulas follow from the independence of $r$ and $\theta$ and explicit formulas for their densities.
\end{proof}

\begin{theorem}[Counterexample to Conjecture~\ref{conj:litvak}]
\label{thm:counterexample}
For $n = 4$, we have
\[
  \EE[M(\Sigma^{\coscov})^2]
  <
  \EE[M(\Sigma^\simplex)^2].
\]
Thus Conjecture~\ref{conj:litvak} is false for $n = 4$ and $p = 2$.
\end{theorem}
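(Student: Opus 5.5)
The plan is to compute both sides of the inequality in closed form, or at least with rigorous bounds, and compare them.

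\emph{The cosine side.} By Proposition~\ref{prop:cosine-law} with $n = 4$ and $p = 2$,
\[
  \EE[M(\Sigma^{\coscov})^2]
  = 2\,\Gamma(2)\cdot\frac{8}{\pi}\int_0^{\pi/8}\sin^2\theta\dd\theta
  = \frac{16}{\pi}\left(\frac{\pi}{16}-\frac{\sqrt2}{8}\right)
  = 1-\frac{2\sqrt2}{\pi}.
\]
Numerically this is approximately $0.0997$.

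\emph{Reducing the simplex side to the sphere.} I will realize $\N(0,\Sigma^\simplex)$ as $g_i = \sqrt{4/3}\,(z_i-\bar z)$ with $z\sim\N(0,I_4)$. The vector $w = z - \bar z\mathbf 1$ is a standard Gaussian in the $3$-dimensional hyperplane $\mathbf 1^\perp$. I will write it as $w = \rho u$, where $\rho^2\sim\chi^2(3)$ and $u$ is uniform on the unit sphere of that hyperplane, with $\rho$ and $u$ independent.

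We have $(z_i - \bar z) = \langle w, a_i\rangle$ with $a_i = e_i - \tfrac14\mathbf 1$ and $|a_i|^2 = 3/4$. Let $b_i = a_i/|a_i|$; these are the unit vertices of a regular tetrahedron. Using $\EE\rho^2 = 3$, we get
\[
  \EE[M(\Sigma^\simplex)^2] = \tfrac43\cdot 3\cdot\tfrac34\,\EE\Bigl[\min_i\langle u,b_i\rangle^2\Bigr] = 3\,\EE\Bigl[\min_i\langle u,b_i\rangle^2\Bigr].
\]
So the task becomes showing $\EE[\min_i\langle u,b_i\rangle^2] > (1-2\sqrt2/\pi)/3 \approx 0.0332$ for $u$ uniform on $\SS^2$.

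\emph{Bounding the spherical expectation.} I will use the layer-cake formula
\[
  \EE\Bigl[\min_i \langle u,b_i\rangle^2\Bigr]=\int_0^1 2s\,\PP\Bigl[\min_i|\langle u,b_i\rangle|\geq s\Bigr]\dd s.
\]
By Archimedes' hat-box theorem, each band $B_i(s) = \{|\langle u,b_i\rangle|<s\}$ has normalized area exactly $s$.

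The crude union bound $\PP[\bigcup_i B_i(s)]\le 4s$ only gives $1/48\approx 0.0208$, which is too weak. So I will instead use inclusion--exclusion. Any three of the $b_i$ are linearly independent, so no unit vector is orthogonal to three of them. Hence for $s$ below an explicit threshold $s_0$, all triple intersections $B_i\cap B_j\cap B_k$ are empty. For $s<s_0$ this gives the exact formula
\[
  \PP\Bigl[\bigcup_i B_i(s)\Bigr]=4s-6A(s),
\]
where $A(s)$ is the normalized area of the intersection of two bands whose axes meet at angle $\arccos(-1/3)$. Near the two antipodal points orthogonal to both axes, this intersection is a pair of spherical parallelogram-like regions. $A(s)$ can be computed in closed form, or bounded below by the flat-parallelogram approximation with a controlled curvature correction.

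On $[0,s_0]$ I will integrate $2s(1-4s+6A(s))$, and on $[s_0,1]$ I will simply use the bound $\PP\ge 0$. I expect that the gain from the $6A(s)$ term pushes the lower bound past $0.0332$.

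\emph{Main obstacle.} The hardest step is this last one: making the computation of $A(s)$ and the threshold $s_0$ rigorous and sharp enough. If analytic bounds are not tight enough, my fallback is to compute $\PP[\min_i|\langle u,b_i\rangle|\ge s]$ by exact integration over a fundamental domain of the tetrahedral symmetry group. In spherical coordinates on that domain the minimizing index is fixed, so the integrand becomes elementary. I would then evaluate the resulting one-dimensional integral with a certified quadrature error bound.
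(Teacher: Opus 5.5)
Your route is correct, but it differs from the paper's. The paper makes the same reduction to $3\,\EE[\min_i\langle u,b_i\rangle^2]$ on $\SS^2$. It then integrates over one of the $48$ congruent sectors of the symmetry group in an explicit chart, and certifies the value by interval arithmetic on a $400\times 400$ grid; that is essentially your fallback. Your inclusion--exclusion argument avoids computer assistance, and the two steps you left open both go through.

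\emph{The threshold.} It is $s_0=1/3$. Since $\sum_l b_l=0$ and $\sum_l\langle u,b_l\rangle^2=4/3$ for unit $u$ (tight frame), if three of the $|\langle u,b_l\rangle|$ are $<s$, then the fourth is $<3s$. This forces $4/3<12s^2$, i.e.\ $s>1/3$. So for $s\le 1/3$ all triple intersections are empty.

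\emph{The curvature correction.} It has a favorable sign, so no error control is needed. Projected onto $\mathrm{span}(b_1,b_2)$, $u$ has density $\frac{1}{2\pi\sqrt{1-|p|^2}}\ge\frac{1}{2\pi}$ on the unit disk. The parallelogram $\{|\langle p,b_1\rangle|<s,\ |\langle p,b_2\rangle|<s\}$ has area $4s^2/\sin\phi=3\sqrt2\,s^2$, where $\sin\phi=2\sqrt2/3$. Its vertices have norm at most $\sqrt3\,s<1$. Hence $A(s)\ge\frac{3\sqrt2}{2\pi}s^2$ for $s\le 1/3$.

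\emph{The final bound.} Dropping $s\in[1/3,1]$ gives
\[
  \EE\Bigl[\min_i\langle u,b_i\rangle^2\Bigr]
  \ge\int_0^{1/3}2s\Bigl(1-4s+\frac{9\sqrt2}{\pi}s^2\Bigr)\dd s
  =\frac{1}{81}+\frac{\sqrt2}{18\pi}\approx 0.0374 .
\]
Thus $\EE[M(\Sigma^\simplex)^2]\ge\frac1{27}+\frac{\sqrt2}{6\pi}\approx 0.1121$. The comparison with $1-\frac{2\sqrt2}{\pi}\approx 0.0997$ reduces to $\frac{\sqrt2}{\pi}>\frac49$, which holds ($0.4502>0.4444$).

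The margin is modest, so the exact value of $s_0$ matters. Truncating anywhere below roughly $0.288$ fails; for example, $s_0=1/4$ gives only $\approx 0.0287<0.0332$.

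What your route buys is a short, fully analytic, hand-checkable certificate. The paper's computer-assisted computation instead gives a tight two-sided enclosure $[0.1396,0.1448]$ of the true simplex moment. That shows the actual gap is considerably larger than your lower bound reveals.
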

\begin{proof}
    This is simple to verify by a numerical computation, but let us give a careful rigorous justification for the sake of completeness.
By Proposition \ref{prop:cosine-law}, we may compute exactly
\[
  \EE[M(\Sigma^{\coscov})^2]
  =
  1-\frac{n\sin(\pi/n)}{\pi}.
\]
For $n=4$, this is
\[
  1-\frac{2\sqrt 2}{\pi}
  \in [0.099683, 0.099684].
\]

It remains to bound the same expectation for the simplex covariance from below.
For $n=4$, realize the simplex covariance as the Gram matrix $\Sigma^{\simplex}_{ij} = \langle u_i, u_j\rangle$ for
\[
  u_1=\frac{(1,1,1)}{\sqrt 3},\quad
  u_2=\frac{(1,-1,-1)}{\sqrt 3},\quad
  u_3=\frac{(-1,1,-1)}{\sqrt 3},\quad
  u_4=\frac{(-1,-1,1)}{\sqrt 3}.
\]
Let $g \sim \N(0, I_3)$ and write $g = (rw_1, rw_2, rw_3)$ in spherical coordinates in $\RR^3$.
Then $r$ and $w$ are independent, $\EE[r^2] = 3$, and
\[
  M(\Sigma^\simplex)
  =
  r \min_i |\langle w,u_i\rangle|.
\]
By the sign-change and coordinate-permutation symmetries of the set of lines spanned by the $u_i$, it suffices to integrate over the sector $w_1 \geq w_2 \geq w_3 \geq 0$ of the sphere $\SS^2 = \{w \in \RR^3: w_1^2+w_2^2+w_3^2=1\}$.
On this sector,
\[
  \min_i |\langle (w_1,w_2,w_3),u_i\rangle|
  =
  \frac{|w_1-w_2-w_3|}{\sqrt 3}.
\]
Using the chart for this sector
\[
  (w_1,w_2,w_3)=\frac{(1,s,st)}{\sqrt{1+s^2+s^2t^2}},
  \qquad s, t \in [0, 1]
\]
and multiplying the integral over the sector by the $2^3 \cdot 3! = 48$ congruent sectors, we get the concrete integral expression
\[
  \EE[M(\Sigma^\simplex)^2]
  =
  \frac{12}{\pi}
  \int_0^1\int_0^1
  \frac{s(1-s-st)^2}{(1+s^2+s^2t^2)^{5/2}}
  \dd s \dd t.
\]
Subdividing the unit square into $400 \times 400$ boxes and computing interval arithmetic bounds on this integral numerically gives
\[
  \EE[M(\Sigma^\simplex)^2] \in [0.139622, 0.144779].
\]
This lower bound is larger than $1-2\sqrt 2/\pi$, completing the proof.
\end{proof}

\begin{figure}[t]
  \centering
  \includegraphics[width=0.88\textwidth]{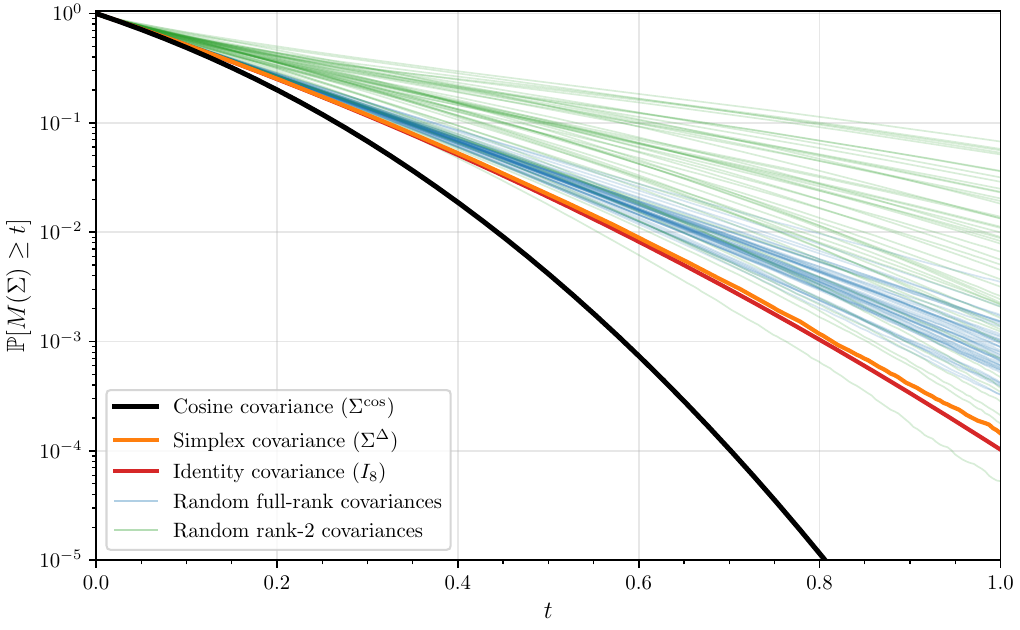}
  \caption{Upper tail probabilities of $M(\Sigma)$ for the cosine covariance, the simplex covariance, the identity covariance, 50 random full-rank covariances, and 50 random rank-two covariances with $n = 8$. Probabilities for random and simplex covariances are Monte Carlo estimates.}
  \label{fig:cdf}
\end{figure}

\section{Strengthened revision of Conjecture~\ref{conj:litvak}}

We further believe that the cosine covariance is the true optimizer in Conjecture~\ref{conj:litvak}, and that this holds in a stronger sense than moment comparison.
We propose the following revision of the Conjecture:

\begin{conjecture}
    \label{conj:main}
    For every $n \geq 1$, every correlation matrix $\Sigma$, and every $t\geq 0$,
    \[
        \PP[M(\Sigma^{\coscov})\geq t]
        \leq
        \PP[M(\Sigma)\geq t].
    \]
    That is, $M(\Sigma^{\coscov})$ is stochastically dominated by $M(\Sigma)$.
    In particular,
    \[
        \EE[M(\Sigma^{\coscov})^p]
        \leq
        \EE[M(\Sigma)^p]
    \]
    for every $p>0$.
\end{conjecture}

\noindent
This phenomenon is illustrated in Figure~\ref{fig:cdf}, which plots the tail probabilities in question for covariances $\Sigma^{\coscov}$, $\Sigma^{\simplex}$, the identity matrix, and many independent draws of random full- and low-rank correlation matrices (formed as Gram matrices of independent unit vectors of suitable dimension).
We note that the tail probability function of $\Sigma^{\coscov}$ seems to be an ``outlier'' in the space of correlation matrices, with exceptional decay much faster than either of the other fixed examples and than typical random covariances.

\section{Volumetric zone conjecture}

We have not been able to prove Conjecture~\ref{conj:main}.
However, we can reduce it to the following geometric conjecture in the spirit of the \FejesToth\ zone conjecture \cite{FejesToth-1973-ExploringPlanet}---now the Jiang--Polyanskii theorem \cite{JP-2017-ZoneConjecture}---and other similar results.
See Section~\ref{sec:related} for more references.

\begin{conjecture}[Volumetric zone conjecture]
\label{conj:volumetric-zone}
Let $d\geq 2$, let $u_1,\dots,u_n\in \SS^{d-1}$, and let $\alpha\in[0,\pi/2]$.
For a unit vector $u$, define the \emph{spherical zone}
\[
  Z(u,\alpha)
  \colonequals
  \{v \in \SS^{d-1}: |\langle v,u\rangle| \leq \sin(\alpha)\}.
\]
For $j = 1, \dots, n$, define
\[
  v_j
  \colonequals
  \left(\cos\left(\frac{(j - 1)\pi}{n}\right),\sin\left(\frac{(j - 1)\pi}{n}\right),0,\dots,0\right)
  \in \SS^{d-1}.
\]
Then
\[
  \sigma_{d-1}\left(\bigcup_{j=1}^n Z(u_j,\alpha)\right)
  \leq
  \sigma_{d-1}\left(\bigcup_{j=1}^{n} Z(v_j,\alpha)\right),
\]
where $\sigma_{d-1}$ denotes the surface measure on $\SS^{d-1}$ normalized to be a probability measure.
\end{conjecture}

The conjecture says that among $n$ equal-width zones on the sphere, the zones around $n$ evenly-spaced great subspheres sharing a subsphere of codimension two maximize the surface measure of the union.
The original zone conjecture of \cite{FejesToth-1973-ExploringPlanet}, proved by \cite{JP-2017-ZoneConjecture}, asked for the minimum width $2\alpha$ required for the union of $n$ spherical zones to cover the entire sphere, claiming that the minimum is achieved by the same configuration.
Our conjecture may be viewed as generalizing this to ``thinner'' configurations of zones that only cover a given fraction of the sphere.
We visualize the conjectural optimal configuration in Figure~\ref{fig:zones}.

\begin{figure}[t]
  \centering
  \includegraphics[width=0.5\textwidth]{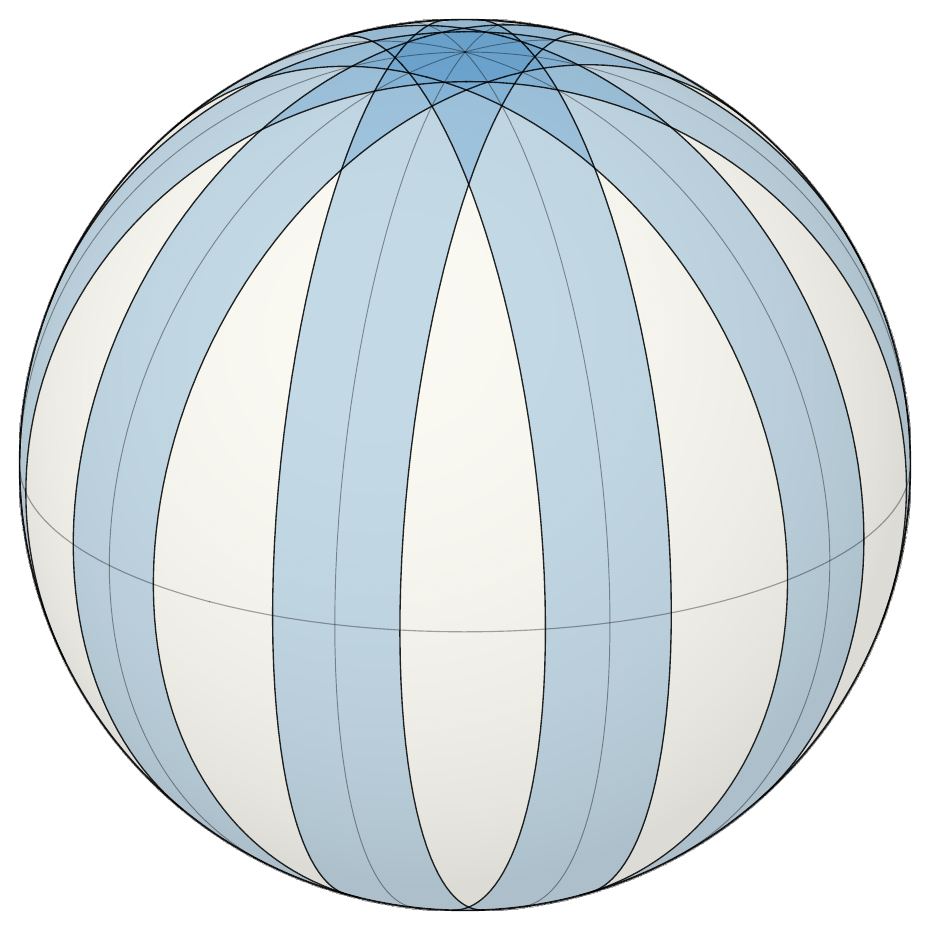}
  \caption{The optimal configuration of spherical zones maximizing the surface measure of their union proposed in Conjecture~\ref{conj:volumetric-zone}.}
  \label{fig:zones}
\end{figure}

This conjecture implies our conjecture on Gaussian minima:
\begin{theorem}
    \label{thm:conditional-domination}
    Conjecture~\ref{conj:volumetric-zone} implies Conjecture~\ref{conj:main}.
\end{theorem}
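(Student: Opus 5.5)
Given a correlation matrix $\Sigma$, factor it as a Gram matrix $\Sigma_{ij}=\langle u_i,u_j\rangle$ with unit vectors $u_1,\dots,u_n\in\SS^{d-1}$. Any $d\geq 2$ works; if $\Sigma$ has rank one, simply embed into $\RR^2$. Then $M(\Sigma)\eqd\min_i|\langle z,u_i\rangle|$ for $z\sim\N(0,I_d)$. Likewise, by the realization before Proposition~\ref{prop:cosine-law}, padding the $v_j$ with zeros, $M(\Sigma^{\coscov})\eqd\min_j|\langle z,v_j\rangle|$ for the same $z\sim\N(0,I_d)$. This puts both sides of Conjecture~\ref{conj:main} in the same dimension $d$, which is what lets us apply Conjecture~\ref{conj:volumetric-zone}.

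Next, write $z=r w$ with $r=\|z\|$ and $w\sim\sigma_{d-1}$ independent. For $t\geq 0$, condition on $r$. The event $\{\min_i|\langle z,u_i\rangle|<t\}$ equals $\{w\in\bigcup_i Z(u_i,\alpha)\}$ when $t/r<1$, where $\sin\alpha=t/r$ and $\alpha\in[0,\pi/2)$. Zones are defined with $\leq$ rather than $<$, but boundaries of zones have measure zero, so this does not matter. When $t/r\geq 1$, both events are certain. Hence
\[
\PP[M(\Sigma)<t]=\EE_r\Bigl[\sigma_{d-1}\Bigl(\textstyle\bigcup_i Z(u_i,\alpha(r))\Bigr)\Bigr],
\]
with the convention that the integrand is $1$ when $r\leq t$. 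The same formula holds for $\Sigma^{\coscov}$ with $v_j$ in place of $u_i$.

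Conjecture~\ref{conj:volumetric-zone} bounds the integrand for the $u_i$ pointwise, for every value of $r$, by the integrand for the $v_j$. Integrating over $r$ gives $\PP[M(\Sigma)<t]\leq\PP[M(\Sigma^{\coscov})<t]$, and taking complements gives the tail inequality. For the moments, the layer-cake formula $\EE[X^p]=\int_0^\infty p t^{p-1}\PP[X\geq t]\dd t$ turns stochastic domination into the moment inequality for every $p>0$.

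The argument is mostly routine. The points needing care are the following:
\begin{itemize}
\item the Gram factorization has to place both configurations in a common dimension $d\geq 2$;
\item the zone-boundary, measure-zero issue has to be handled;
\item the case $n=1$ is trivial, since both sides have the same law.
\end{itemize}
The genuinely hard content lies entirely in Conjecture~\ref{conj:volumetric-zone}. The key observation is that rotation invariance of the Gaussian reduces the Gaussian tail to a mixture over radii of spherical zone measures, each of which that conjecture controls uniformly.
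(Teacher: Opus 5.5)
Your proposal is correct and follows essentially the same route as the paper: realize $\Sigma$ as a Gram matrix, decompose the Gaussian into an independent radius and a uniform direction, identify each radial slice as a union of zones with $\sin\alpha = t/r$, apply Conjecture~\ref{conj:volumetric-zone} pointwise in $r$, and integrate. The remaining differences are cosmetic. You use strict-inequality events and take complements, handling the zone boundaries as null sets, where the paper works with $\leq t$ and appeals to the absence of atoms. You also allow any ambient dimension $d \geq 2$, where the paper embeds in $\RR^n$.
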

\begin{proof}
    The cases $n = 1$ and $t = 0$ of Conjecture~\ref{conj:main} are trivial, so suppose $n \geq 2$ and $t > 0$.
    Suppose Conjecture~\ref{conj:volumetric-zone} holds.
    Choose unit vectors $u_1,\dots,u_n$ in $\RR^n$ whose Gram matrix is $\Sigma$, i.e., having $\Sigma_{ij} = \langle u_i, u_j \rangle$.
    Let $g \sim \N(0,I_n)$.
    Then
    \[
        (\langle g,u_1\rangle,\dots,\langle g,u_n\rangle)
        \sim \N(0,\Sigma),
    \]
    so
    \[
        M(\Sigma)
        \eqd
        \min_{j = 1}^n |\langle g,u_j\rangle|.
    \]

    For $t>0$, define the union of slabs
    \[
        B(t)
        \colonequals
        \bigcup_{j=1}^n
        \{v\in\RR^n: |\langle v,u_j\rangle| \leq t\}.
    \]
    Then
    \[
        \PP[M(\Sigma) \leq t] = \gamma_n(B(t)),
    \]
    where $\gamma_n = \N(0, I_n)$ is the standard Gaussian measure on $\RR^n$.

    Write $S(t, r) \colonequals B(t) \cap \SS^{n - 1}(r)$, the intersection of this union of slabs with the sphere of radius $r$.
    When $t / r \geq 1$, $S(t, r) = \SS^{n - 1}(r)$.
    When $0 < t / r < 1$, we have by definition
    \[ S(t, r) = r \cdot \bigcup_{j=1}^n Z\left(u_j,\arcsin\left(\frac{t}{r}\right)\right). \]

    Thus, defining
    \[ \alpha(t, r) \colonequals \begin{cases}
     \pi/2 & \text{if } r \leq t,\\
     \arcsin(t/r) & \text{if } r > t
     \end{cases} \]
    we compute
    \begin{align*}
      \PP[M(\Sigma) \leq t]
      &= \gamma_n(B(t)) \\
      &= \Px_{g \sim \N(0, I_n)}[g \in B(t)] \\
      &= \Px_{g \sim \N(0, I_n)}[g \in S(t, \|g\|)] \\
      &= \Px_{g \sim \N(0, I_n)}\left[\frac{g}{\|g\|} \in \bigcup_{j = 1}^nZ\left(u_j,\alpha(t, \|g\|)\right)\right]
        \intertext{and since $\|g\|$ is independent of $g / \|g\|$ and the latter has law $\sigma_{n - 1}$, we have}
      &= \Ex_{g \sim \N(0, I_n)}\left[\sigma_{n - 1}\left(\bigcup_{j = 1}^nZ\left(u_j,\alpha(t, \|g\|)\right)\right)\right]
        \intertext{and letting $v_i$ be as in Conjecture~\ref{conj:volumetric-zone} and using that we assume the conjecture holds,}
      &\leq \Ex_{g \sim \N(0, I_n)}\left[\sigma_{n - 1}\left(\bigcup_{j = 1}^nZ\left(v_j,\alpha(t, \|g\|)\right)\right)\right]
        \intertext{and reversing the same manipulations}
      &= \PP[M(\Sigma^{\coscov}) \leq t],
    \end{align*}
    giving the result since the law of $M(\Sigma)$ does not have atoms for any correlation matrix $\Sigma$.
    The result on moments follows by the standard manipulation of expressing the expectations as integrals over the tail probabilities.
\end{proof}

\section{Discussion}

\subsection{Related work}
\label{sec:related}

Conjecture~\ref{conj:litvak} appears as Conjecture~5.1 in Litvak's survey \cite{Litvak-2018-SimplexMeanWidthConjecture} on the simplex mean width conjecture (which is Conjecture~1.4 of the survey).
Litvak presents it as part of a family of Gaussian extremal problems related to simplex covariances, including the mean-width problem, the simplex code conjecture, and various related geometric claims.

One motivation for Conjecture~\ref{conj:litvak} comes from a conjecture of Mallat and Zeitouni on the Karhunen--Lo{\`e}ve basis in nonlinear reconstruction \cite{MZ-2011-KarhunenLoeveConjecture}.
Litvak notes that the original intuition behind that problem would have followed from a stronger statement about minima with $p=2$ for which the identity covariance was proposed as optimal.
The same discussion records that van Handel gave an example showing that, in the three-variable case, the simplex covariance improves on the identity covariance.
While the original conjecture of Mallat--Zeitouni remains open, it was later resolved by Litvak and Tikhomirov up to absolute constants \cite{LT-2018-OrderStatisticsDependent}.

The estimates of Gordon, Litvak, Sch{\"u}tt, and Werner on minima of Gaussian and more general random variables give another nearby point of comparison \cite{GLSW-2005-MinimaGaussian,GLSW-2006-MinimumSeveral}.
Those works give sharp-order bounds for minima and order statistics under broad assumptions.
The present problem is different in that the marginal variances are fixed and the covariance matrix itself is the object to optimize.

The geometric input proposed here is a volumetric variant of \FejesToth's zone problem~\cite{FejesToth-1973-ExploringPlanet}.
Jiang and Polyanskii proved the covering version of \FejesToth's conjecture in all dimensions~\cite{JP-2017-ZoneConjecture}.
Ortega-Moreno gave a different proof for the equal-width case~\cite{OrtegaMoreno-2021-OptimalPlank}, and Zhao later streamlined that argument~\cite{Zhao-2022-ExploringPlanetRevisited}.
Glazyrin, Karasev, and Polyanskii developed polynomial plank methods which also give a strengthening for spherical segments~\cite{GKP-2023-CoveringPlanksZeros}.
For a broader view of plank problems and their applications, see Verreault's survey~\cite{Verreault-2022-PlankSurvey}.
While Conjecture~\ref{conj:volumetric-zone} seems like a natural generalization, to the best of our knowledge it has not appeared in the literature.

\subsection{AI assistance}
\label{sec:ai}

Our $\Sigma^{\coscov}$ counterexample was found by black-box minimization of the functions $f_p(\Sigma) = \EE[M(\Sigma)^p]$ using AlphaEvolve \cite{Google-2025-AlphaEvolve}, an optimization system based on code generation with large language models (see, e.g., \cite{NRT-2025-AlphaEvolveCombinatorialStructures,NRT-2026-AlphaEvolveCombinatorialStructures2} for other recent results obtained using it).
After AlphaEvolve surfaced $\Sigma^{\coscov}$ it was easy to recognize (for instance from a plot like in Figure~\ref{fig:covariances}) that there was an underlying continuous function, and then to identify it as the cosine.
Experimenting with minimizing $g_t(\Sigma) = \PP[M(\Sigma) \geq t]$ led to the stronger formulation of Conjecture~\ref{conj:main}.

Afterwards, we also considered whether more conventional optimization methods such as differential evolution and BFGS-type gradient methods could find the same optimizer.
While these often gave improvements on the simplex covariance that would just as well disprove Conjecture~\ref{conj:litvak}, they only rarely converged to (what we believe is) the true minimizer $\Sigma^{\coscov}$, especially for $n$ larger than the dimension of the smallest counterexample $n = 4$ (for $n = 3$, $M(\Sigma^{\coscov})$ and $M(\Sigma^{\simplex})$ have the same law up to a coordinate sign change) and especially when optimizing the $g_t$ functions which we evaluate by estimating the probabilities empirically.\footnote{When run over only rank-two correlation matrices, these simpler methods did often find $\Sigma^{\coscov}$; however, this is a rather unrealistic point of comparison since we did not know before seeing the result of AlphaEvolve that the optimizer should be of such low rank.}
We also found that both the $f_p$ and $g_t$ functions can have spurious local minimizers different than the global minimizer $\Sigma^{\coscov}$.
Thus, aside from convenience of implementation, AlphaEvolve seems to offer an advantage as a research tool in this setting.
In general, inspecting the output of AlphaEvolve or similar systems by hand to look for tractable structure seems to be a useful further step to translate its results into mathematical insights.
It is an intriguing question whether this process itself could be automated to further accelerate research assisted with black-box optimization.

We also remark that the connection with the \FejesToth\ zone conjecture and related literature was unknown to us until it was surfaced by GPT-5.5~Pro when asked to prove Conjecture~\ref{conj:main}.
On the other hand, this model repeatedly produced proofs which used that Conjecture as if it were included in the results of \FejesToth\ or others cited above.
Its mistake was nonetheless informative: once the unsupported step was isolated, it pointed directly to the reasonable and independently interesting Conjecture \ref{conj:volumetric-zone}.

\section*{Acknowledgments}

Thanks to the Google DeepMind AlphaEvolve team for providing early access to AlphaEvolve and in particular to Adam Zsolt Wagner for answering my many questions about using the system.

\bibliographystyle{alpha}
\bibliography{main}

\end{document}